\definecolor{halfgray}
{gray}{0.55}%chapter numbers will be semi
\definecolor{webgreen}
{rgb}{0,0.4,0}
\definecolor{webbrown}
{rgb}{.8,0.1,0.1}
\definecolor{red}
{rgb}{1,0,0}
\newcommand \R {{ \mathbb R}}
\def\C{{\mathbb C}}
\newcommand \Z {{ \mathbb Z}}
\newcommand*{\diff}{\mathop{}\!\mathrm{d}}
\newcommand{\norm}[1]{\left\lVert#1\right\rVert}
\newcommand{\SL}{%
\operatorname{SL}
}
\DeclareMathOperator{\vol}{vol}
\DeclareMathOperator{\un}{u}
\DeclareMathOperator{\Spec}{Spec}
\DeclareMathOperator{\Ad}{Ad}
\newtheorem{theorem}{Theorem}%[section]
\newtheorem {lemma}[theorem]{Lemma}
\newtheorem {proposition}[theorem]{Proposition}
\newtheorem{corollary}[theorem]{Corollary}
\date{\today}
\author{Davide Ravotti}
\address{
Monash University, School of Mathematics \\ Clayton Campus, 3800 Victoria, Australia
}
\email{davide.ravotti@gmail.com\\}
 \title[Quantitative equidistribution of horocycle push-forwards of transverse arcs]
 {Quantitative equidistribution of horocycle push-forwards of transverse arcs}
\begin{document}
\maketitle

\begin{abstract}
Let $M = \Gamma \backslash \SL(2,\R)$ be a compact quotient of $\SL(2,\R)$ equipped with the normalized Haar measure $\vol$, and let $\{h_t\}_{t \in \R}$ denote the horocycle flow on $M$.
Given $p \in M$ and $W \in \mathfrak{sl}_2(\R) \setminus \{0\}$ not parallel to the generator of the horocycle flow, let $\gamma_{p}^W$ denote the probability measure uniformly distributed along the arc $s \mapsto p \exp(sW)$ for $0\leq s \leq 1$. 
We establish quantitative estimates for the rate of convergence of $[(h_t)_\ast \gamma_{p}^W](f)$ to $\vol(f)$ for sufficiently smooth functions $f$.
Our result is based on the work of Bufetov and Forni \cite{BuFo}, together with a crucial geometric observation.
%shows in particular that arcs transverse to the weak-stable foliation of the geodesic flow equidistribute at a faster rate than geodesic segments. 
As a corollary, we provide an alternative proof of Ratner's theorem on quantitative mixing for the horocycle flow.
%It is well-known that any geodesic arc is sheared by the action of the horocycle flow, and, after large times, approximates a long horocycle orbit segment. Moreover, it follows from the work by Bufetov and Forni that such sheared geodesic arcs equidistribute at the same rate as the orbits of the horocycle flow. In this note, exploiting the properties of the Bufetov-Forni functionals, we provide quantitative estimates for the equidistribution of the push-forward of any homogeneous arc under the horocycle flow. Our main result shows that arcs transverse to the weak-stable foliation of the geodesic flow equidistribute at a faster rate than geodesic segments. As a corollary, we provide an alternative proof of Ratner's theorem on quantitative mixing for the horocycle flow.
\end{abstract}

\section{Introduction}

The horocycle flow $\{h_t\}_{t \in \R}$ on compact quotients of $\SL(2,\R)$ is one of the fundamental examples of parabolic unipotent flows.
Its dynamical and ergodic properties are well-understood: it has zero entropy \cite{Gur}, it is minimal \cite{Hed}, uniquely ergodic \cite{Fur}, mixing and mixing of all orders \cite{Ma2}, and has countable Lebesgue spectrum \cite{Par}. 

Quantitative versions of these results have also been investigated. 
Ratner \cite{Rat1} established optimal polynomial mixing rates for H{\" o}lder observables. Moreover, it follows from a general result by Bj{\" o}rklund, Einsiedler, and Gorodnik \cite{BEG} that, for all $k \geq 2$, mixing of order $k$ is also polynomial.
Regarding quantitative equidistribution, Flaminio and Forni \cite{FF} proved precise results on the asymptotics of ergodic averages of smooth functions. The results in \cite{FF} have been refined by Bufetov and Forni in \cite{BuFo}, where the authors construct a family of finitely-additive H{\" o}lder measures and associated H{\" o}lder functionals which control the asymptotics of ergodic integrals. The main result of this paper, Theorem \ref{thm:main} below, is based on the properties of these \emph{Bufetov-Forni functionals}, which we recall in~\S\ref{sec:BF}.

\subsection{Shearing properties of the horocycle flow}

One of the key geometric properties of the horocycle flow is a form of \emph{shearing} of transverse arcs.
For example, let $\gamma(s) = p \exp(sX)$, $0 \leq s \leq 1$, be the geodesic segment starting at $p \in M$ of length 1. From the usual commutation relation between the geodesic and horocycle flow, it is easy to see that the curve $h_t \circ \gamma$ is \emph{sheared} along the direction of the horocycle flow and, for $t$ large enough, it approximates a long segment of an orbit of the horocycle flow. 
In particular, it becomes \emph{equidistributed}; namely, given any continuous function $f$, the integral of $f \circ h_t$ along $\gamma(s)$ converges to the space average of $f$, when $t \to \infty$.  
This mechanism has been exploited in the proofs of several results; for example, by Marcus in \cite{Ma1} to prove mixing for horocycle flows in variable negative curvature, and by Forni and Ulcigrai in \cite{FU} to establish Lebesgue spectrum of smooth time-changes of the standard horocycle flow.

Moreover, it follows from the work of Bufetov and Forni \cite{BuFo} that such sheared geodesic arcs equidistribute at the same rate as horocycle orbits. 
It is also plain that the same phenomenon happens if one replace the initial geodesic arc with any homogeneous segment which lies in a single $\{X,U\}$-leaf; that is, a single weak-stable leaf for the geodesic flow. 
%$\gamma(s)$ of the form $p \exp(sW)$ for some $p \in M$ and $W \in \mathfrak{sl}_2(\R)$ which lie in a single $XU$ leaf (i.e., such that $W = xX+uU$).

What happens if the arc $\gamma(s) = p \exp(sW)$ is not contained in a single $\{X,U\}$-leaf (i.e., if the generator $W \in \mathfrak{sl}_2(\R)$ has a non-zero component in the direction of the unstable horocycle flow) is less clear, since the curve  $h_t \circ \gamma$ \emph{does not} approximate a single orbit of the horocycle flow.
By approximating it with \emph{several} orbit segments, it is possible to show that also in this case the curve $h_t \circ \gamma$ equidistributes. However, the quantitative estimates one can prove with this approach are far from optimal. 
In this paper, following a different strategy which heavily relies on the properties of the Bufetov-Forni functionals (in particular, on the \emph{unstable horocycle invariance}, see Lemma \ref{key}), we prove sharper estimates for the equidistribution of arbitrary sheared arcs, see Theorem \ref{thm:main}. As a corollary, we provide an alternative proof of Ratner's quantitative mixing estimates, see Corollary~\ref{thm:main2}.

\subsection{Definitions and notations}

Before stating our main result, we recall some definitions and we fix the notation.

Let $\Gamma < \SL(2,\R)$ be a co-compact lattice, let $M = \Gamma \backslash \SL(2,\R)$, and let us denote by $\vol$ the smooth probability measure on $M$ given locally by the Haar measure.
The manifold $M$ can be identified with the unit tangent bundle $T^1S$ of the compact hyperbolic surface $S = \Gamma \backslash \mathbb{H}$. 
The spectrum of the \emph{Laplace-Beltrami operator} $\Delta_S$ on $S$ is pure point and discrete. In particular, if we denote by $(\mu_n)_{n \geq 0}$ the positive eigenvalues, there is a \emph{spectral gap}, since the bottom $\mu_0$ of the non-zero spectrum is strictly positive.
Let us further define
$$
\nu_0 :=
\begin{cases}
\sqrt{1-4\mu_0} & \text{ if } \mu_0 < 1/4,\\
0 & \text{ if } \mu_0 \geq 1/4,
\end{cases}
\text{\ \ \ \ \ \ }
\varepsilon_0 :=
\begin{cases}
0 & \text{ if } \mu_0 \neq 1/4,\\
1 & \text{ if } \mu_0 = 1/4.
\end{cases}
\text{\ \ \ and\ \ \ }
\delta_0 :=
\begin{cases}
0 & \text{ if } \mu_0 < 1/4,\\
1 & \text{ if } \mu_0 \geq 1/4.
\end{cases}
$$

Let us denote by $L^2(M, \vol)$ the space of square-integrable functions and by $L^2_0(M, \vol)$ the subspace of functions with zero integral.
For any $r>0$, let $W^r(M)$ be the Sobolev space of functions $f \in L^2(M, \vol)$ such that $\Delta^{r/2}f \in L^2(M,\vol)$, where $\Delta$ is a Laplacian on $M$ (see \S\ref{sec:spectral} for definitions).
Let us remark that, for $r > 3/2$, by Sobolev Embedding Theorem, we have that $W^r(M) \subset \mathscr{C}^{\alpha}(M)$, for $\alpha < r-3/2$.
 
We denote the Lie algebra of $\SL(2,\R)$ by $\mathfrak{sl}_2(\R)$; it consists of $2 \times 2$ real matrices with zero trace.
Each element $W \in \mathfrak{sl}_2(\R) \setminus \{0\}$ generates the homogeneous flow $\{\varphi_t^W \}_{t \in \R}$ on $M$ given by 
$$
\varphi_t^W(\Gamma g) = \Gamma g \exp(tW).
$$ 
We fix the basis $\mathscr{B} = \{V, X, U\}$ of $\mathfrak{sl}_2(\R)$, where 
$$
V =
\begin{pmatrix}
0 & 0 \\
1 & 0
\end{pmatrix},
\ \ \ 
X =
\begin{pmatrix}
1/2 & 0 \\
0 & -1/2
\end{pmatrix},
\ \ \ 
U =
\begin{pmatrix}
0 & 1 \\
0 & 0
\end{pmatrix}.
$$
Then, $\mathscr{B}$ is a frame of the tangent bundle of $M$.
The homogeneous flows generated by $V,X,U$ are the unstable horocycle flow  $ \{h^{\un}_t \}_{t \in \R}$, the geodesic flow $ \{g_t \}_{t \in \R}$, and the stable horocycle flow $\{h_t \}_{t \in \R}$ respectively.
Finally, let us denote by $\widehat{\mathscr{B}} = \{\widehat{V}, \widehat{X}, \widehat{U} \}$ the frame of the cotangent bundle of $M$ dual to $\mathscr{B}$.

\subsection{Statement of the main result}

Let $f \in L^2_0(M,\vol)$ be a sufficiently smooth function with zero average. 
We are interested in the asymptotics of the following integrals
$$
\left\lvert \int_0^\sigma f \circ h_t \circ \varphi^W_s(p) \diff s \right\rvert,
$$
for fixed $\sigma >0$ and $W \in \mathfrak{sl}_2(\R) \setminus \{0\}$.

The case $W=X$ follows easily from the work of Bufetov and Forni; a more general statement for smooth time-changes of the horocycle flow can be found in \cite[Lemma 17]{FU}.

\begin{theorem}
Let $r >11/2$. For any $\sigma >0$, there exists a constant $C=C(r,\sigma)>0$ such that for any $f \in W^r(M) \cap L^2_0(M,\vol)$, for all $0 \leq S \leq \sigma$, $t\geq 1$ and $p \in M$ we have 
$$
\left\lvert \int_0^S f \circ h_t \circ g_s(p) \diff s \right\rvert \leq C \norm{f}_r t^{-\frac{1-\nu_0}{2}} (\log t)^{\varepsilon_0}.
$$
\end{theorem}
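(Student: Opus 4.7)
The plan is to use the commutation relation between the geodesic and horocycle flows to rewrite the integral as a horocycle ergodic average carrying a bounded geodesic deformation, and then to apply the Flaminio--Forni / Bufetov--Forni asymptotic bound. First I would record the identity $h_t \circ g_s = g_s \circ h_{te^s}$, which follows from $\mathrm{Ad}(\exp(sX))U = e^s U$ (i.e.\ from $[X,U]=U$). Writing $\psi_s := f \circ g_s$, the integral becomes
\[
I(t) := \int_0^S f \circ h_t \circ g_s(p)\,\diff s = \int_0^S \psi_s\bigl(h_{te^s}(p)\bigr)\,\diff s,
\]
and $\norm{\psi_s}_r \leq C(\sigma)\norm{f}_r$ uniformly for $s \in [0,\sigma]$, since $g_s$ is a smooth diffeomorphism of $M$.

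Next I would display $\psi_s(h_{te^s}p)$ as a total $s$-derivative. Introduce the horocycle ergodic integral $F_s(T) := \int_0^T \psi_s(h_v p)\,\diff v$ and observe that $\tfrac{\diff}{\diff s}[F_s(te^s)] = (\partial_s F_s)(te^s) + te^s\cdot \psi_s(h_{te^s}p)$, so
\[
\psi_s(h_{te^s} p) = \frac{1}{te^s}\frac{\diff}{\diff s}\bigl[F_s(te^s)\bigr] - \frac{(\partial_s F_s)(te^s)}{te^s}.
\]
Integrating over $[0,S]$ and integrating by parts in the first term (using $\tfrac{\diff}{\diff s}(te^s)^{-1} = -(te^s)^{-1}$) yields
\[
I(t) = \frac{F_S(te^S)}{te^S} - \frac{F_0(t)}{t} + \int_0^S \frac{F_s(te^s)}{te^s}\,\diff s - \int_0^S \frac{(\partial_s F_s)(te^s)}{te^s}\,\diff s.
\]

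Each of the four terms is then controlled using the Bufetov--Forni bound $\bigl|\int_0^T \phi(h_v p)\,\diff v\bigr| \leq C\norm{\phi}_{r_0} T^{(1+\nu_0)/2}(\log T)^{\varepsilon_0}$, valid for $\phi \in W^{r_0}(M) \cap L^2_0(M,\vol)$ with $r_0$ sufficiently large. Applied to $\phi=\psi_s$ it controls the $F_s$-terms, and applied to $\phi=(Xf)\circ g_s$ it controls the $\partial_s F_s$-term, at the cost of one extra derivative on $f$. Since $s \in [0,\sigma]$ is bounded, $te^s$ is comparable to $t$ up to a $\sigma$-dependent factor and $\log(te^s)$ to $\log t$ for $t \geq 1$; dividing by $te^s$ converts the growth exponent $(1+\nu_0)/2$ into the decay exponent $-(1-\nu_0)/2$, producing the desired estimate with a constant $C = C(r,\sigma)$.

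The main obstacle I expect is the one-derivative loss caused by the $s$-dependence of $\psi_s$: bounding $\partial_s F_s$ forces the Bufetov--Forni estimate to be applied to $Xf$, and therefore costs $\norm{f}_{r_0+1}$ instead of $\norm{f}_{r_0}$. Together with the minimal $r_0$ for which the Bufetov--Forni asymptotic is available, this is exactly what fixes the regularity threshold at $r > 11/2$; no obvious reorganisation of the integration by parts in the $W=X$ case appears to avoid this extra derivative.
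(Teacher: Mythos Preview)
Your argument is correct but follows a different route from the one the paper has in mind. The paper does not spell out a proof here (it simply says the case $W=X$ ``follows easily from the work of Bufetov and Forni'' and refers to \cite[Lemma 17]{FU}); however, the intended argument, visible in the proof of Proposition~\ref{prop1} when specialized to $v=0$, $x=1$, $u=0$, is to apply the Bufetov--Forni \emph{curve} estimate (Theorem~\ref{thm:BF2}) directly to the arc $\gamma_{p,t}^X(s)=h_t\circ g_s(p)$, whose tangent by Lemma~\ref{adjoint} is $X+tU$, and then to renormalize $\widehat{\beta}_f(\gamma_{p,t}^X)$ by $g_{\log t}$ via property~(2) of Theorem~\ref{thm:BF}. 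That route involves no derivative loss: the threshold $r>11/2$ is exactly the one already required for Theorem~\ref{thm:BF2}. Your reduction, via $h_t\circ g_s=g_s\circ h_{te^s}$ and integration by parts, is more elementary in that it only invokes the Flaminio--Forni bound for \emph{straight} horocycle orbits rather than the curve version, but the $\partial_s F_s$ term costs one extra derivative on $f$. Whether this still lands at $r>11/2$ thus depends on the minimal Sobolev exponent for the orbit estimate; your claim that the two thresholds match ``exactly'' should be checked against the precise regularity in \cite{FF,BuFo}, whereas the paper's direct approach avoids the issue altogether.
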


It is easy to see that the same estimates (up to constant) hold if the geodesic segment $\{g_s(p) : s \in [0,S]\}$ is replaced with any segment of the form $\{\varphi_s^W(p): s \in [0,S]\}$, where $W = xX + uU$, with $x \neq 0$.

In this paper, we generalize the previous result to \emph{arbitrary} $W \in \mathfrak{sl}_2(\R) \setminus \{0\}$, namely to arbitrary arcs $\{\varphi_s^W(p) : s \in [0,S] \}$ not tangent to the integrable distribution $\{X,U\}$. 
Our main result is the following.

\begin{theorem}\label{thm:main}
Let $r > 11/2$ and let
$$
W = vV+xX+uU \in \mathfrak{sl}_2(\R), \text{\ \ \ with\ \ \ }v \neq 0.
$$
For any $\sigma >0$, there exists a constant $C=C(r,\sigma,W)$ such that for any $f \in W^r(M)\cap L^2_0(M,\vol)$, for all $0 \leq S \leq \sigma$, $t \geq 1$ and $p \in M$, we have 
$$
\left\lvert \int_0^S f \circ h_t \circ \varphi^W_s(p) \diff s \right\rvert \leq C \norm{f}_r t^{-(1-\nu_0)}(\log t)^{\delta_0}.
$$
\end{theorem}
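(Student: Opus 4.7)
The plan is to rewrite the arc integral, via the $\SL(2,\R)$ commutation and an Iwasawa-type decomposition, as the product of a $t^{-2}$ prefactor and a horocycle ergodic integral of length of order $t^{2}$. Since the Bufetov--Forni asymptotic from~\S\ref{sec:BF} estimates a horocycle integral of length $T$ by a main term of size $T^{(1+\nu_0)/2}(\log T)^{\varepsilon_0}$, the prefactor $1/t^{2}$ turns this into the claimed rate $t^{-(1-\nu_0)}$, up to logarithmic factors. The unstable horocycle invariance of the Bufetov--Forni functionals (Lemma~\ref{key}) is essential for handling a drift of the basepoint that arises along the way.

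Explicitly, $\exp(sW)\exp(tU)=\exp(tU)\exp(sW_t)$ with $W_t=\Ad(\exp(-tU))W=vV+(x-2vt)X+(u+xt-vt^2)U$ rewrites the integral as $\int_0^S f\bigl(h_t(p)\exp(sW_t)\bigr)\,\diff s$. Since the Casimir $\tilde x^2/4+\tilde u\tilde v = x^2/4+uv$ does not depend on $t$, $\exp(sW_t)$ has a closed form and admits a decomposition
\[
\exp(sW_t)=\exp(\gamma_t(s)\,U)\exp(\beta_t(s)\,X)\exp(\alpha_t(s)\,V),
\]
which in the parabolic model case $W=V$ (where $W_t=V-2tX-t^2U$) gives the explicit expressions $\gamma_t(s)=-st^2/(1+st)$, $\beta_t(s)=-2\log(1+st)$, $\alpha_t(s)=s/(1+st)$. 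The crucial geometric features, coming from $v\neq 0$, are $|\alpha_t(s)|=O(1/t)$ and the fact that $\gamma_t'(s)$ carries a factor of $t^2$ near $s=0$. Substituting $\mu=t+\gamma_t(s)$ and using the geodesic--horocycle commutation $\exp(\beta X)\exp(\eta U)=\exp(\eta e^{\beta}U)\exp(\beta X)$, the integral takes the form
\[
\frac{1}{t^2}\int_{I_t} f\bigl(h_\eta(g_{\rho(\eta)}(p))\,\exp(\alpha_t(\eta)\,V)\bigr)\,\diff\eta,
\]
where $|I_t|$ has order $S t^2$ and $|\rho(\eta)|=O(\log t)$ is a geodesic drift.

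A Taylor expansion in the small parameter $\alpha_t$ reduces the problem to an essentially horocycle integral with drifting basepoint, plus an $O(\|f\|_r/t)$ remainder that is absorbed into the target rate. Applying the Bufetov--Forni asymptotic to a horocycle segment of length $|I_t|$ at a \emph{fixed} basepoint would give $\|f\|_r\,|I_t|^{(1+\nu_0)/2}(\log|I_t|)^{\varepsilon_0}$, which combined with the $1/t^2$ prefactor is exactly $\|f\|_r\,t^{-(1-\nu_0)}(\log t)^{\varepsilon_0}$. The main obstacle is the geodesic drift $g_{\rho(\eta)}(p)$ in the basepoint: a naive argument would incur an additional logarithmic loss. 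This is where Lemma~\ref{key} intervenes: the unstable horocycle invariance of the Bufetov--Forni functionals allows one to straighten the drift at the cost of the final $(\log t)^{\delta_0}$ factor appearing in the bound. The regularity requirement $r>11/2$ is the combined cost of the Taylor expansion in $\alpha_t$, of Sobolev embedding for pointwise control of derivatives of $f$ in the $V$ and $X$ directions, and of the regularity built into the Bufetov--Forni framework.
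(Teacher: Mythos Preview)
Your approach differs from the paper's and, as written, has two substantive gaps.

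First, you misidentify Lemma~\ref{key}. It is \emph{not} the statement of unstable-horocycle invariance of the Bufetov--Forni measures; that is simply property~(3) in Theorem~\ref{thm:BF}. Lemma~\ref{key} is the geometric computation that the curve $h^{\un}_{-t}\circ g_{2\log t}\circ\gamma_{p,t}^W$ has tangent components uniformly bounded in $t$. The paper's argument for positive Casimir parameters never reparametrises into a long horocycle integral; instead it applies Theorem~\ref{thm:BF2} directly to the arc $\gamma_{p,t}^W$, then uses properties~(2) and~(3) to rewrite $\widehat\beta^\pm_\mu(\gamma_{p,t}^W)=t^{1\mp\nu}\,\widehat\beta^\pm_\mu(h^{\un}_{-t}\circ g_{2\log t}\circ\gamma_{p,t}^W)$, and finally invokes Lemma~\ref{key} together with property~(4) to bound the right-hand side by a constant. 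Crucially, the cancellation in Lemma~\ref{key} uses the $V$-component of the tangent of $\gamma_{p,t}^W$: in the model case $W=V$ one checks that $\Ad_{\exp(-tV)}\Ad_{\exp(2\log t\,X)}(V-2tX-t^2U)=-U$, and the three terms on the left all interact in this cancellation.

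Your Taylor expansion in $\alpha_t$ discards exactly this $V$-component. After the expansion, the remaining curve $\eta\mapsto h_\eta(g_{\rho(\eta)}(p))$ lies in an $\{X,U\}$-leaf with tangent $-\tfrac{2}{\eta}X-U$, so $\int|\widehat X|\asymp 2\log(1+St)$ while $\int|\widehat U|\asymp St^2$. Property~(4) then gives only $|\widehat\beta^\pm_\mu|\le C(1+\log t)\,(St^2)^{(1\mp\Re\nu)/2}$, and after dividing by $t^2$ you obtain $t^{-(1-\nu_0)}\log t$. When $\mu_0<1/4$ the statement requires $\delta_0=0$, i.e.\ no logarithm, so this does not suffice; and unstable-horocycle invariance cannot repair the loss, since the drift you need to straighten is \emph{geodesic}, not along $V$. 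In short, the ``Lemma~\ref{key} intervenes'' step is exactly where the argument breaks.

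Second, you do not treat the discrete series. The Bufetov--Forni functionals $\widehat\beta^\pm_\mu$ exist only for $\mu>0$, so your argument says nothing about the component $f_d$. The paper handles this separately (Proposition~\ref{prop2}) by splitting $[0,S]$ into $O(t)$ subintervals of length $(\ell t)^{-1}$, projecting each short arc onto the $\{X,U\}$-leaf at distance $O(1/t)$, and bounding each projected piece by $C_r\|f\|_r t^{-2}\log t$; summing yields the $t^{-1}\log t$ bound responsible for the factor $(\log t)^{\delta_0}$ in the final statement. Any complete proof must include an argument of this kind.
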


In particular, the result above shows that arcs with a non-zero component in the direction $V$ equidistribute \emph{faster} than geodesic arcs.

The proof of Theorem \ref{thm:main} follows two different strategies for functions supported on the principal and complementary series, and for functions supported on the discrete series, see Propositions \ref{prop1} and \ref{prop2}.
In the former case, treated in Proposition \ref{prop1}, we exploit the properties of the H{\" o}lder functionals constructed by Bufetov and Forni, together with a key geometrical observation, see Lemma \ref{key}. 
In the case of negative Casimir parameters, however, the Bufetov-Forni functionals are not defined. For functions supported on the discrete series, we then proceed by a standard approximation argument, analogous to the case of the push-forward of geodesic arcs.
For this reason, the bounds achieved in this case, contained in Proposition \ref{prop2}, are not optimal. 
However, the resulting estimates for generic functions obtained in Theorem \ref{thm:main} by combining Propositions \ref{prop1} and \ref{prop2} differ from the optimal ones only by a logarithmic factor, and only when $\mu_0 >1/4$.

By a standard \emph{mixing via shearing} argument, from equidistribution estimates for the push-forward of transverse arcs, it is possible to deduce quantitative mixing estimates. Optimal mixing rates for the geodesic and horocycle flows were obtained by Ratner in \cite{Rat1}; here, we provide an alternative geometric proof of her result\footnote{As explained before, our estimates differ from Ratner's by a factor $\log t$ when $\mu_0 > 1/4$.} for smooth observables.

\begin{corollary}[{Ratner's quantitative mixing \cite{Rat1}}]\label{thm:main2}
Let $r > 11/2$. There exists a constant $C>0$ such that for all $f \in W^r(M)$, for all $g \in L^{2}(M,\vol)$ such that $Vg \in L^{2}(M,\vol)$, and for all $t \geq 1$ we have 
$$
\left\lvert \int_M (f \circ h_t) \, g \diff \vol - \int_M f \diff \vol \int_M g \diff \vol \right\rvert \leq C \norm{f}_r (\norm{g}_2 + \norm{Vg}_{2}) t^{-(1-\nu_0)}(\log t)^{\delta_0}.
$$
\end{corollary}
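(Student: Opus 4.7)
The plan is to deduce Corollary \ref{thm:main2} from Theorem \ref{thm:main} by the classical ``mixing via shearing'' scheme, with the unstable horocycle direction $V$ playing the role of the transverse direction. A first reduction is to assume $\int_M f \diff \vol = 0$: replacing $f$ with $f - \int_M f \diff \vol$ changes $\norm{f}_r$ only by a universal multiplicative constant, and the difference on the left-hand side of the corollary becomes simply $\int_M ((f-\int_M f \diff \vol) \circ h_t)\, g \diff \vol$, so the problem reduces to bounding $\left\lvert \int_M (f \circ h_t)\, g \diff \vol \right\rvert$ for $f \in W^r(M) \cap L^2_0(M,\vol)$.

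Next, I would exploit the invariance of $\vol$ under the unstable horocycle flow $\varphi_s^V = h^{\un}_s$ to write, for every $s$,
$$
\int_M (f \circ h_t)\, g \diff \vol = \int_M (f \circ h_t \circ \varphi_s^V)\, (g \circ \varphi_s^V) \diff \vol,
$$
then average over $s \in [0,1]$ and apply Fubini. For each fixed $p \in M$, integrating by parts in $s$ yields
$$
\int_0^1 f(h_t(\varphi_s^V(p)))\, g(\varphi_s^V(p)) \diff s = F_p(1)\, g(\varphi_1^V(p)) - \int_0^1 F_p(s)\, (Vg)(\varphi_s^V(p)) \diff s,
$$
where $F_p(s) := \int_0^s f \circ h_t \circ \varphi_\tau^V(p) \diff \tau$. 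Here Theorem \ref{thm:main}, applied with $W = V$ (so that $v = 1 \neq 0$) and $\sigma = 1$, is precisely what is needed: it supplies the pointwise bound $|F_p(s)| \leq C \norm{f}_r t^{-(1-\nu_0)}(\log t)^{\delta_0}$, uniformly in $p \in M$ and $s \in [0,1]$.

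Substituting this bound, integrating in $p$ over $M$, and using the $\varphi_s^V$-invariance of $\vol$ to identify the integrals of $|g| \circ \varphi_1^V$ and $|Vg| \circ \varphi_s^V$ with those of $|g|$ and $|Vg|$, one obtains an estimate of the form $C \norm{f}_r t^{-(1-\nu_0)}(\log t)^{\delta_0} (\norm{g}_1 + \norm{Vg}_1)$; Cauchy--Schwarz then upgrades the $L^1$ norms to the $L^2$ norms on the right-hand side of the corollary. There is no genuine obstacle in this step, since the heavy lifting has been done in Theorem \ref{thm:main}: the corollary is a short Fubini and integration-by-parts argument. The only point that merits attention is the choice of transverse direction: integrating by parts along $V$-orbits is precisely the reason the hypothesis on $g$ takes the form $Vg \in L^2(M,\vol)$ rather than a condition in the $U$- or $X$-directions.
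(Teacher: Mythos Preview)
Your proposal is correct and follows essentially the same ``mixing via shearing'' argument as the paper: average over the unstable horocycle direction using invariance of $\vol$, integrate by parts in the $V$-direction, and bound the resulting integrals $\int_0^S f\circ h_t\circ h^{\un}_s(p)\diff s$ via Theorem~\ref{thm:main} with $W=V$. The only cosmetic difference is that the paper carries out the integration by parts at the level of the $L^2$ inner product and applies Cauchy--Schwarz directly, whereas you perform the integration by parts pointwise in $p$, obtain $L^1$ norms of $g$ and $Vg$, and then pass to $L^2$ norms; the two presentations are equivalent.
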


\section{Preliminaries}

\subsection{Shearing of transverse arcs}

Let us recall that, if we denote by $L_g \colon \SL(2,\R) \to \SL(2,\R)$ the left-multiplication by $g$ and by $(L_g)_\ast$ its push-forward, any element $W \in \mathfrak{sl}_2(\R)$ induces the vector field, which we still denote by $W$, defined by $W|_g=(L_g)_\ast (W)$.

We will need the following basic facts about the tangent vectors of curves in $M$. 

\begin{lemma}\label{derivative}
Let $W = vV+xX+uU \in \mathfrak{sl}_2(\R) \setminus \{0\}$ and let
$$
\exp(sW) = 
\begin{pmatrix}
a(s) & b(s) \\
c(s) & d(s)
\end{pmatrix}.
$$
Then
$$
\begin{pmatrix}
a'(s) & b'(s) \\
c'(s) & d'(s)
\end{pmatrix}
=
\begin{pmatrix}
a(s) & b(s) \\
c(s) & d(s)
\end{pmatrix}
\begin{pmatrix}
x/2 & u \\
v & -x/2
\end{pmatrix}.
$$
\end{lemma}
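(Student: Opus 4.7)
The proof is a direct computation, so the plan is straightforward.

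First, I would express the generator $W = vV + xX + uU$ as a $2\times 2$ matrix using the definitions of $V$, $X$, $U$ given in the paper. Adding the three matrices entrywise yields
$$
W = \begin{pmatrix} x/2 & u \\ v & -x/2 \end{pmatrix},
$$
which is exactly the right factor appearing in the statement of the lemma.

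Next, I would invoke the standard identity for the matrix exponential: since $W$ commutes with $sW$, we have
$$
\frac{d}{ds}\exp(sW) = \exp(sW)\cdot W.
$$
Writing $\exp(sW)$ as the matrix with entries $a(s), b(s), c(s), d(s)$ and multiplying on the right by the explicit form of $W$ obtained above immediately gives the claimed formula for the derivatives.

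Since both ingredients (identifying $W$ in the basis $\mathscr{B}$ and the derivative of the matrix exponential) are elementary, there is no real obstacle; the lemma is just the coordinate expression of the right-invariance convention $W|_g = (L_g)_\ast W$ fixed in the preceding paragraph, and is recorded here for later use in computing tangent vectors along the curves $s \mapsto p\exp(sW)$.
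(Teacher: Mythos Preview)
Your proposal is correct and is essentially the same as the paper's proof: the paper writes the identity $\frac{\diff}{\diff s}\exp(sW) = (L_{\exp(sW)})_\ast W$ via the chain rule, which in matrix terms is exactly your formula $\exp(sW)\cdot W$, and then identifies $W$ with the matrix $\begin{pmatrix} x/2 & u \\ v & -x/2 \end{pmatrix}$.
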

\begin{proof}
The claim follows by the chain rule: let $g = \exp(sW)$; then we have
\begin{equation*}
\begin{split}
\begin{pmatrix}
a'(s) & b'(s) \\
c'(s) & d'(s)
\end{pmatrix}
&= \frac{\diff}{\diff s} \exp(sW) = (L_g)_\ast \left( \frac{\diff}{\diff s} \bigg\rvert_{s=0} \exp(sW) \right) = (L_g)_\ast(W) \\
&= \begin{pmatrix}
a(s) & b(s) \\
c(s) & d(s)
\end{pmatrix}
\begin{pmatrix}
x/2 & u \\
v & -x/2
\end{pmatrix}
.
\end{split}
\end{equation*}
\end{proof}

For any $g \in \SL(2,\R)$, the Adjoint $\Ad_g \colon \mathfrak{sl}_2(\R) \to \mathfrak{sl}_2(\R)$ is the linear map defined by $\Ad_g(W) = g^{-1}Wg$. The Adjoint describes the action of a homogeneous flow on tangent vectors, namely we have the following result.

\begin{lemma}\label{adjoint}
Let $W,Y \in \mathfrak{sl}_2(\R) \setminus \{0\}$. For all $t,s \in \R$ and for all $p \in M$, we have
$$
\frac{\diff}{\diff s} \varphi_t^Y \circ \varphi_s^W(p) =  \Ad_{\exp(tY)}(W)|_{\varphi_t^Y \circ \varphi_s^W(p)}
$$
\end{lemma}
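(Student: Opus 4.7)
The plan is a direct computation using left-invariance of the vector fields together with the defining property of the exponential map. Write $p = \Gamma g$ for some $g \in \SL(2,\R)$, so that
\[
\varphi_t^Y \circ \varphi_s^W(p) = \Gamma\, g\exp(sW)\exp(tY).
\]
Set $q(s) := g\exp(sW)\exp(tY)$. By Lemma \ref{derivative} (applied with $s \mapsto \exp(sW)$, whose derivative at the identity is $W$, together with the chain rule), we get
\[
q'(s) = g\exp(sW)\,W\,\exp(tY).
\]
This is a tangent vector at $q(s)$, and the task is to express it in the frame $\mathscr{B}$, i.e.\ as a left-invariant vector field at $q(s)$.

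Next, recall that for any $h \in \SL(2,\R)$ and any $Z \in \mathfrak{sl}_2(\R)$, the left-invariant vector field $Z$ takes the value $Z|_h = h\cdot Z$ (viewing $h\cdot Z$ as matrix multiplication). Thus I need to find $Z \in \mathfrak{sl}_2(\R)$ such that
\[
q(s)\cdot Z = g\exp(sW)\,W\,\exp(tY).
\]
Substituting the expression for $q(s)$ and cancelling $g\exp(sW)$ on the left leaves the identity $\exp(tY)\,Z = W\exp(tY)$, which gives
\[
Z = \exp(-tY)\,W\,\exp(tY) = \Ad_{\exp(tY)}(W),
\]
by the definition of the adjoint representation used in the paper. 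Hence
\[
\frac{\diff}{\diff s}\varphi_t^Y \circ \varphi_s^W(p) = \Ad_{\exp(tY)}(W)\big|_{\varphi_t^Y \circ \varphi_s^W(p)},
\]
which is the claim. The whole proof is essentially a bookkeeping exercise with the left-invariant frame, and I do not anticipate any real obstacle; the only point to be careful about is that the flow $\varphi_t^Y$ acts by right-multiplication by $\exp(tY)$, so the generator $W$ of the inner flow gets conjugated (rather than simply translated) when one tries to re-express the tangent vector at the new base point.
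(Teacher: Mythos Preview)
Your proof is correct and follows essentially the same route as the paper's: both compute the derivative of $s\mapsto p\exp(sW)\exp(tY)$ and then re-express the resulting tangent vector in the left-invariant frame, obtaining $\exp(-tY)W\exp(tY)=\Ad_{\exp(tY)}(W)$. The only cosmetic difference is that the paper phrases the computation using the push-forward $(L_{q(s)})_\ast$ while you work directly with matrix products (which is legitimate since $\SL(2,\R)$ is a matrix Lie group).
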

\begin{proof}
We have 
\begin{equation*}
\begin{split}
\frac{\diff}{\diff s} \varphi_t^Y \circ \varphi_s^W(p) &= \frac{\diff}{\diff s}  \left(p \exp(sW) \exp(tY) \right) \\
&= \left( L_{p \exp(sW) \exp(tY)} \right)_{\ast} \left( \frac{\diff}{\diff s} \bigg\rvert_{s=0} \exp(-tY)\exp(sW)\exp(tY) \right) \\
&=  \left( L_{\varphi_t^Y \circ \varphi_s^W(p)} \right)_{\ast} \Ad_{\exp(tY)}(W).
\end{split}
\end{equation*}
\end{proof}

\subsection{Spectral theory of the horocycle flow}\label{sec:spectral}

We now recall some results about the spectral theory of the horocycle flow.
Let 
$$
\Theta = 
\begin{pmatrix}
0 & 1/2 \\
-1/2 & 0
\end{pmatrix}
$$
be a generator of the maximal compact subgroup $K = \text{SO}(2) \subset \SL(2,\R)$. 
We define the \emph{Casimir operator} $\square$ by
$$
\square = -X^2 -(V + \Theta)^2 +\Theta^2,
$$
which is a generator of the centre of the universal enveloping algebra of $\mathfrak{sl}_2(\R)$. 
By the classical theory of unitary representations of $\SL(2,\R)$, we have the following orthogonal decomposition into irreducible components (listed with multiplicity)
\begin{equation}\label{eq:suml2}
L^2(M) = \bigoplus_{\mu \in \Spec(\square)} H_\mu,
\end{equation}
and on each $H_\mu$, the Casimir operator $\square$ acts as the constant $\mu \in \R_{>0} \cup \{ -n^2+n : n \in \Z_{\geq 0}\}$. The component $H_0$ corresponds to the trivial representation and appears with multiplicity one. The positive Casimir parameters $\mu >0$ coincide with the positive eigenvalues of the Laplace-Beltrami operator $\Delta_S$ on the hyperbolic surface $S = \Gamma \backslash \mathbb{H}$.

The irreducible components $H_\mu$ are divided into three series: the \emph{principal series} for Casimir parameters $\mu \geq 1/4$,  the \emph{complementary series} for Casimir parameters $0 <\mu < 1/4$, and  the \emph{discrete series} for negative Casimir parameters $\mu \leq 0$.

Let $\Delta $ be the \emph{Laplacian} defined by $\Delta = -(X^2 + U^2/2 + V^2/2)$. 
For any Hilbert space $H$ on which $\SL(2,\R)$ acts unitarily, we define the \emph{Sobolev space $W^r(H)$ of order} $r>0$ to be the maximal domain of the operator $(1+\Delta)^{r/2}$, equipped with the inner product 
$$
\langle f,g \rangle_r = \langle (1+\Delta)^r f,g \rangle_{H}.
$$
We denote by $\norm{\cdot}_r$ the norm in $W^r(H)$ defined by the inner product above. 
When $H = L^2(M,\vol)$, we will simply write $W^r(M)$ instead of $W^r(L^2(M,\vol))$.
The space $W^r(M)$ coincides with the completion of $\mathscr{C}^{\infty}(M)$ of infinitely differentiable functions on $M$ with respect to the norm $\norm{\cdot}_r$.
The direct sum \eqref{eq:suml2} induce a corresponding decomposition of the Sobolev spaces for any fixed $r>0$; namely we have 
\begin{equation}\label{eq:sumw}
W^r(M) = \bigoplus_{\mu \in \Spec(\square)} W^r(H_\mu).
\end{equation}

\subsection{Bufetov-Forni functionals}\label{sec:BF}

The proof of our main result is a consequence of the properties of the Bufetov-Forni functionals introduced in \cite{BuFo}. We briefly recall the results we will use, and we refer the reader to \cite{BuFo} and to the work of Flaminio and Forni \cite{FF} for more details.

For every positive Casimir parameter $\mu>0$, let $\nu = \sqrt{1-4\mu} \in \C$. We remark that $\nu$ is purely imaginary if $\mu$ belongs to the principal series, and $0<\nu<1$ if $\mu$ belongs to the complementary series.

\begin{theorem}[{\cite[Theorem 1.1]{BuFo}}]\label{thm:BF}
For any positive Casimir parameter $\mu >0$, there exist two independent normalized finitely additive measures $\widehat{\beta}^{\pm}_{\mu}$ such that for any rectifiable arc $\gamma \subset M$ the following properties hold:
\begin{enumerate}
\item for any decomposition $\gamma = \gamma_1 + \gamma_2$ into subarcs, 
$$
\widehat{\beta}^{\pm}_{\mu}(\gamma) = \widehat{\beta}^{\pm}_{\mu}(\gamma_1) + \widehat{\beta}^{\pm}_{\mu}(\gamma_2);
$$
\item for all $t \in \R$ and $\mu\neq 1/4$,
$$
\widehat{\beta}^{\pm}_{\mu}(g_{-t}\gamma) = \exp\left( \frac{1\mp \nu}{2} t\right) \widehat{\beta}^{\pm}_{\mu}(\gamma),
$$
while, for $\mu = 1/4$,
$$
\begin{pmatrix}
\widehat{\beta}^{+}_{1/4}(g_{-t}\gamma) \\
\widehat{\beta}^{-}_{1/4}(g_{-t}\gamma)
\end{pmatrix}
= \exp \left( \frac{t}{2}\right) 
\begin{pmatrix}
1 & -\frac{t}{2} \\
0 & 1
\end{pmatrix}
\begin{pmatrix}
\widehat{\beta}^{+}_{1/4}(\gamma) \\
\widehat{\beta}^{-}_{1/4}(\gamma)
\end{pmatrix}
$$
\item for all $t \in \R$, 
$$
\widehat{\beta}^{\pm}_{\mu}(h^{\un}_{t}\gamma) = \widehat{\beta}^{\pm}_{\mu}(\gamma),
$$
\item there exists a constant $C >0$ such that for all $\mu \neq 1/4$,
$$
\left\lvert \widehat{\beta}^{\pm}_{\mu}(\gamma) \right\rvert \leq C \left( 1+ \int_\gamma | \widehat{X} | + \int_\gamma | \widehat{U}| \int_\gamma |\widehat{V}| \right) \left( \int_\gamma |\widehat{U}| \right)^{\frac{1\mp \Re \nu}{2}}
$$
and, for $\mu = 1/4$,
\begin{equation*}
\begin{split}
\left\lvert \widehat{\beta}^{+}_{1/4}(\gamma) \right\rvert &\leq C \left( 1+ \int_\gamma | \widehat{X} | + \int_\gamma | \widehat{U}| \int_\gamma |\widehat{V}| \right) \left( \int_\gamma |\widehat{U}| \right)^{\frac{1}{2}} \left( 1 + \log\int_\gamma |\widehat{U}| \right)\\
\left\lvert \widehat{\beta}^{-}_{1/4}(\gamma) \right\rvert &\leq  C \left( 1+ \int_\gamma | \widehat{X} | + \int_\gamma | \widehat{U}| \int_\gamma |\widehat{V}| \right) \left( \int_\gamma |\widehat{U}| \right)^{\frac{1}{2}}
\end{split}
\end{equation*}
\end{enumerate}
\end{theorem}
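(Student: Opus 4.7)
The plan is to construct the functionals $\widehat{\beta}^{\pm}_{\mu}$ one irreducible component at a time, by combining the Flaminio--Forni classification of $U$-invariant distributions with a Stokes-type extension from horocycle arcs to arbitrary rectifiable arcs. I would work within a single $H_\mu$ appearing in the decomposition \eqref{eq:suml2} and, at the end, sum the resulting estimates over the spectrum.

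First I would invoke from \cite{FF} that, for each positive Casimir parameter $\mu>0$, the space of $U$-invariant distributional vectors inside $H_\mu$ is two-dimensional, spanned by two distributions $\mathcal{D}^{\pm}_\mu$ which are joint (generalized) eigenvectors for the dual action of the geodesic flow, with eigenvalues $\exp\bigl(\tfrac{1\mp\nu}{2}t\bigr)$ under $g_{-t}^\ast$ when $\mu\ne 1/4$ and forming the Jordan block of the stated shape at $\mu=1/4$. For a horocycle segment $\gamma_U=\{p\,h_s:0\le s\le T\}$, define
\[
\widehat{\beta}^{\pm}_{\mu}(\gamma_U)(f):=\mathcal{D}^{\pm}_\mu\bigl(T_{p,T}f\bigr),\qquad f\in H_\mu,
\]
where $T_{p,T}f$ denotes the restriction of $f$ to the arc paired against $\mathcal{D}^{\pm}_\mu$ via the horocycle parameter. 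Additivity (1) and $U$-invariance (3) are automatic for horocycle arcs; the scaling relation (2) is exactly the eigenvalue equation for $\mathcal{D}^{\pm}_\mu$, with the matrix behaviour at $\mu=1/4$ inherited from the Jordan block.

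Second, to extend to arcs not tangent to $U$, I would use a Stokes/cocycle argument. Given a short transverse arc $\alpha$, enclose $\alpha$ and its $h_\varepsilon$-translate inside a thin quadrilateral $Q_\varepsilon$ whose remaining two sides are horocycle segments of length $\varepsilon$. Requiring that $\widehat{\beta}^{\pm}_{\mu}$ vanish on $\partial Q_\varepsilon$, i.e.\ that the associated $1$-current on $M$ be $d$-closed modulo exact corrections, determines $\widehat{\beta}^{\pm}_{\mu}(\alpha)$ from the horocycle values; equivalently, $\widehat{\beta}^{\pm}_{\mu}$ represents the class of the distributional $1$-current $\mathcal{D}^{\pm}_\mu\cdot\widehat{U}$ on $M$. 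Additivity and $U$-invariance extend from horocycle to arbitrary rectifiable arcs by polygonal approximation; the renormalization relation (2) extends because $g_{-t}^\ast\widehat{U}=e^{t}\widehat{U}$ combines with the eigenvalue $e^{\mp\nu t/2}$ of $\mathcal{D}^{\pm}_\mu$ to yield precisely $e^{(1\mp\nu)t/2}$.

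Third, I would prove the Hölder bound (4), which is where the main technical difficulty lies. For a horocycle arc of length $L\geq 1$ the sharp bound $|\widehat{\beta}^{\pm}_{\mu}(\gamma_U)|\lesssim L^{(1\mp\re\nu)/2}$ follows by self-similar renormalization: apply $g_{-\log L}$ to reduce to unit length, where a uniform Sobolev trace bound on $\mathcal{D}^{\pm}_\mu$ is available from \cite{FF}, and restore the prefactor via (2). For a general rectifiable arc, decompose its tangent along the frame $\mathscr{B}=\{V,X,U\}$; the $X$-displacement contributes the $1+\int_\gamma|\widehat{X}|$ factor (the $X$-direction commutes diagonally with the scaling), while a $V$-excursion, being conjugate-dual to $U$ under $\Ad_{g_t}$, produces together with the companion $U$-extent an \emph{area} term $\int_\gamma|\widehat{U}|\int_\gamma|\widehat{V}|$, obtained by iterating the Stokes extension of step two over a parallelogram. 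The Jordan block at $\mu=1/4$ generates the additional $\log$ factor through a logarithmic iteration of the self-similar rescaling. The main obstacle is to make this extension rigorous --- proving existence, uniqueness and $d$-closedness of the transverse extension, verifying that the polygonal approximation converges for every rectifiable arc, and isolating the product structure $\int|\widehat{U}|\int|\widehat{V}|$ in the bound --- which requires careful regularization within $H_\mu$ (via smoothing by $\Delta$) to ensure that all distributional pairings and Sobolev trace inequalities are valid uniformly in the representation.
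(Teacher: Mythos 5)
The first thing to note is that the paper does not prove this statement at all: Theorem \ref{thm:BF} is quoted verbatim from \cite[Theorem 1.1]{BuFo} and is used as a black box, so there is no internal proof to compare your attempt against --- the relevant comparison is with the original construction of Bufetov and Forni. Measured against that, your outline is pointed in the right direction (the functionals are indeed built from the two Flaminio--Forni invariant distributions $D^{\pm}_{\mu}$ of \cite{FF}, with the covariance (2) coming from their geodesic eigenvalue/Jordan-block structure and the extension to general arcs exploiting the $\{V,X\}$-transverse structure and the $h^{\un}$-invariance (3)), but as written it is a plan rather than a proof, and the plan defers precisely the parts that constitute the content of the theorem.

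Concretely, three steps are asserted but not established. First, the pairing $\widehat{\beta}^{\pm}_{\mu}(\gamma_U)(f):=\mathcal{D}^{\pm}_{\mu}(T_{p,T}f)$ is not well defined as stated: $D^{\pm}_{\mu}$ are distributions of positive Sobolev order on $M$, not measures, so ``restriction of $f$ to the arc paired against $\mathcal{D}^{\pm}_{\mu}$'' has no meaning without the duality/approximation argument that in \cite{BuFo} produces the H\"older cocycles on horocycle arcs in the first place (note also that the resulting object must be a functional of the arc alone, independent of $f$, which your formula is not). Second, the Stokes/$1$-current extension to arbitrary rectifiable arcs requires existence, uniqueness and convergence statements that you explicitly leave open; this is where the finite additivity on all rectifiable arcs, and the uniformity of the constant $C$ in $\mu$, have to be proved. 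Third, and most importantly, the H\"older bound (4) --- including the specific product term $\int_\gamma|\widehat{U}|\int_\gamma|\widehat{V}|$, the exponent $\tfrac{1\mp\re\nu}{2}$, and the extra logarithm at $\mu=1/4$ --- is the main quantitative result of \cite{BuFo}, and your paragraph sketches a heuristic (self-similar renormalization plus an area term) without carrying out any estimate. Since the present paper's subsequent arguments (Lemma \ref{key}, Propositions \ref{prop1}--\ref{prop2}) rely on exactly these quantitative properties, the correct course here is to cite \cite{BuFo} as the paper does, or else to reproduce their proof in full; the proposal as it stands does neither.
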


For any $\mu>0$, let us denote by $D^+_\mu, D^-_\mu$ the two normalized invariant distributions introduced by Flaminio and Forni in \cite{FF}.
Let $r > 11/2$; for any function $f \in W^r(M)$ supported on irreducible components of the principal and complementary series and for any rectifiable arc $\gamma \subset M$, let us define 
$$
 \widehat{\beta}_{f}(\gamma)= \sum_{\mu \in \Spec (\square) \cap \R_{>0}} D^+_\mu(f)  \widehat{\beta}^{+}_{\mu} (\gamma) + D^-_\mu(f)  \widehat{\beta}^{-}_{\mu}(\gamma).
$$
\begin{theorem}[{\cite[Theorem 1.3]{BuFo}}]\label{thm:BF2}
For any $r > 11/2$ there exists a constant $C_r >0$ such that for every rectifiable arc $\gamma \subset M$ and for all $f \in W^r(M)$ supported on irreducible components of the principal and complementary series we have
$$
\left\lvert \int_\gamma f \widehat{U} -  \widehat{\beta}_{f}(\gamma) \right\rvert \leq C_r\norm{f}_r \left( 1+ \int_\gamma |\widehat{X}| + \int_\gamma |\widehat{V}| \right).
$$
\end{theorem}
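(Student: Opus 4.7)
The plan is to establish the estimate first on pure horocycle arcs, where the Flaminio--Forni asymptotic expansion of ergodic integrals is available, and then to extend it to arbitrary rectifiable arcs via the finite additivity and the transformation properties~(1)--(4) of Theorem~\ref{thm:BF}. At the outset, using the spectral decomposition \eqref{eq:sumw} and linearity of both sides in $f$, one reduces to $f \in W^r(H_\mu)$ for a single irreducible component with $\mu > 0$. For $r > 11/2$ the Flaminio--Forni invariant distributions $D_\mu^\pm$ extend continuously to $W^r(H_\mu)$ with operator norms summable over $\Spec(\square) \cap \R_{>0}$, so the single-component estimates aggregate to the full bound linearly in $\|f\|_r$.

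Fix $\mu > 0$ and $\nu = \sqrt{1-4\mu}$. For a horocycle arc $\gamma_{p,T} = \{h_s(p) : 0 \le s \le T\}$, the Flaminio--Forni theorem~\cite{FF} yields
\[
\int_0^T f(h_s(p))\,\diff s \;=\; c^+ D_\mu^+(f)\, T^{\frac{1+\nu}{2}} + c^- D_\mu^-(f)\, T^{\frac{1-\nu}{2}} + R_\mu(f,p,T),
\]
with $|R_\mu(f,p,T)| \le C_r \|f\|_r$ uniformly in $p$ and $T$ (with an extra $\log T$ at $\mu = 1/4$). The measures $\widehat{\beta}_\mu^\pm$ are normalized so that $\widehat{\beta}_\mu^\pm(\gamma_{p,T}) = c^\pm\, T^{(1\mp\nu)/2}$, in compatibility with the geodesic scaling in property~(2). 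Since $\int_{\gamma_{p,T}} |\widehat{X}| = \int_{\gamma_{p,T}} |\widehat{V}| = 0$, the desired estimate on horocycle arcs is precisely this remainder bound.

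To extend to an arbitrary rectifiable $\gamma$, I would approximate it by a piecewise path composed of segments tangent to $V$, $X$, or $U$, with total transverse lengths comparable to $\int_\gamma |\widehat{V}|$ and $\int_\gamma |\widehat{X}|$. By additivity~(1) of both $\int_\cdot f\widehat{U}$ and $\widehat{\beta}_f$, each piece is handled separately: $U$-segments via the previous paragraph; $V$-segments contribute zero to $\int f\widehat{U}$ (since $\widehat{U}(V) = 0$) and leave $\widehat{\beta}_\mu^\pm$ unchanged by invariance~(3); $X$-segments also contribute zero to $\int f\widehat{U}$, while their effect on $\widehat{\beta}_\mu^\pm$ is governed by the scaling factor $\exp((1\mp\nu)\tau/2)$ from~(2), whose integrated effect is absorbed by the term $\int_\gamma |\widehat{X}|$ on the right-hand side.

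The main obstacle is that a naive polygonal refinement introduces an $O(\|f\|_r)$ remainder per horocycle piece, so the total error could grow with the number of pieces. The resolution in~\cite{BuFo} is to \emph{define} $\widehat{\beta}_\mu^\pm$ directly as a finitely additive set function on arbitrary arcs satisfying a self-similar renewal equation under the geodesic renormalization, rather than as a limit of polygonal sums. The bounds in item~(4) of Theorem~\ref{thm:BF} are proved concurrently with the construction, and the estimate in the present theorem emerges as the stability of $\widehat{\beta}_f(\gamma)$ under perturbations of $\gamma$ in the $X$- and $V$-directions, quantified by exactly those bounds.
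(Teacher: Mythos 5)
First, a point of comparison: the paper does not prove this statement at all --- it is quoted verbatim as \cite[Theorem 1.3]{BuFo}, so there is no internal proof to measure your argument against. Judged on its own, your sketch does not constitute a proof: it ultimately defers the decisive step to the very construction in \cite{BuFo} that the theorem encapsulates. You correctly identify the obstacle (a polygonal decomposition of $\gamma$ into horocycle pieces produces an $O(\norm{f}_r)$ remainder \emph{per piece}, so the errors accumulate with the number of pieces and cannot yield a bound by $1+\int_\gamma|\widehat{X}|+\int_\gamma|\widehat{V}|$), but your resolution is to say that \cite{BuFo} defines $\widehat{\beta}^{\pm}_{\mu}$ directly and proves the bounds ``concurrently with the construction.'' That is exactly the content being asserted, so the heart of the theorem is assumed rather than derived; nothing in your last paragraph explains \emph{why} the comparison between $\int_\gamma f\widehat{U}$ and $\widehat{\beta}_f(\gamma)$ loses only an additive term controlled by the $X$- and $V$-lengths of $\gamma$.

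There is also a concrete incorrect step. You assert that the functionals are normalized so that $\widehat{\beta}^{\pm}_{\mu}(\gamma_{p,T})=c^{\pm}\,T^{(1\mp\nu)/2}$ on horocycle arcs. This cannot hold: by finite additivity (property (1)), $T\mapsto\widehat{\beta}^{\pm}_{\mu}(\gamma_{p,T})$ is an additive cocycle over the horocycle flow, i.e.\ $\widehat{\beta}^{\pm}_{\mu}(\gamma_{p,T+S})=\widehat{\beta}^{\pm}_{\mu}(\gamma_{p,T})+\widehat{\beta}^{\pm}_{\mu}(\gamma_{h_T p,S})$, which is incompatible with a pure power law $c^{\pm}T^{(1\mp\nu)/2}$ independent of the base point. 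Property (4) gives only the upper bound $|\widehat{\beta}^{\pm}_{\mu}(\gamma_{p,T})|\lesssim T^{(1\mp\re\nu)/2}$; the actual values fluctuate with $p$, and this non-degeneracy is precisely what makes the Bufetov--Forni limit theorem nontrivial. With the power-law identification removed, your ``horocycle arc'' step reduces to the Flaminio--Forni expansion with \emph{some} coefficients, and the identification of those coefficients with the finitely additive measures $\widehat{\beta}^{\pm}_{\mu}$ --- again the substance of \cite[Theorem 1.3]{BuFo} --- is left unproved. In addition, the claim that $V$-segments ``leave $\widehat{\beta}^{\pm}_{\mu}$ unchanged by invariance (3)'' misreads that property: (3) concerns push-forward of a whole arc by $h^{\un}_t$, not the value on an arc tangent to $V$ (the latter vanishes, but by the bound (4) with $\int_\gamma|\widehat{U}|=0$, not by (3)). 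So the proposal as written has genuine gaps and one false assertion, and should be regarded as a plausibility sketch of why the quoted theorem is the right tool, not as a proof of it.
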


\section{Proof of Theorem \ref{thm:main}}

Let $W = vV+xX+uU \in \mathfrak{sl}_2(\R)$ with $v \neq 0$ be fixed. 
Let $\sigma >0$ and $0 \leq S \leq \sigma$, and define the curve
$$
\gamma_{p,t}^W(s) = h_t \circ \varphi_s^{W}(p), \text{\ \ \ for } s \in [0, S].
$$
Up to replacing $\sigma$ and $S$ by $c\sigma$ and $cS$, for some $c>0$, we can assume that $\max\{|v|,|x|,|u|\} \leq 1$.

Let $r > 11/2$ and let $f \in W^r(M) \cap L^2_0(M,\vol)$. Using the orthogonal decomposition \eqref{eq:sumw}, we can write 
$$
f = \sum_{\mu \in \Spec (\square)} f_{\mu},
$$
where $f_\mu \in W^r(H_\mu)$, and, for $\mu=0$, the component $f_0=0$.
Let us denote by
$$
f_{d} = \sum_{\mu \in \Spec (\square) \cap \R_{<0}} f_{\mu}, \text{\ \ \ and\ \ \ } f_d^{\perp} = \sum_{\mu \in \Spec (\square) \cap \R_{>0}} f_{\mu},
$$
so that 
$$
\int_0^S f \circ \gamma_{p,t}^W(s) \diff s = \int_0^S f_d \circ \gamma_{p,t}^W(s) \diff s + \int_0^S f_d^\perp \circ \gamma_{p,t}^W(s) \diff s. 
$$
We will estimate the two integrals separately. The proof of Theorem \ref{thm:main} follows from Propositions \ref{prop1} and \ref{prop2} below applied to $f_d^\perp$ and $f_d$ respectively.

\subsection{Positive Casimir parameters}

%Let us define 
%$$
%\overline{\varepsilon}_0 :=
%\begin{cases}
%0 & \text{ if } \mu_0 \neq 1/4,\\
%1 & \text{ if } \mu_0 = 1/4.
%\end{cases}
%$$
%
This section is devoted to the proof of the follwing result.
\begin{proposition}\label{prop1}
Let $r > 11/2$ and let $f \in W^r(M) \cap L^2_0(M,\vol)$ be supported on irreducible components of the principal and complementary series. Then, there exists a constant $C=C(r,\sigma,W)>0$ such that for all $0 \leq S \leq \sigma$, $t \geq 2$ and $p \in M$, we have
$$
\left\lvert \int_0^S f \circ h_t \circ \varphi_s^W(p) \diff s \right\rvert \leq C \norm{f}_r t^{-(1-\nu_0)}(\log t)^{\varepsilon_0}.
$$ 
\end{proposition}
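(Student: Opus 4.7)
The plan is to reduce the integral to the Bufetov--Forni functionals and exploit a straightening of the curve via unstable horocycle invariance. By Lemmas~\ref{derivative} and~\ref{adjoint}, write
$$
\gamma(s) := h_t \circ \varphi_s^W(p) = h_t(p)\exp(sZ_t), \qquad Z_t = vV + AX + BU,
$$
with $A := x - 2tv$ and $B := u + tx - t^2v$. Since $Z_t$ has constant $U$-coefficient along this homogeneous arc, $\int_\gamma f\,\widehat U = B\int_0^S f\circ\gamma(s)\,\diff s$; moreover $|B|\gtrsim_W t^2$ for $t$ larger than some $T_0(W)$, the finitely many smaller values of $t$ being absorbed into the constant by a Sobolev-embedding estimate. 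Theorem~\ref{thm:BF2} then controls $\int_\gamma f\,\widehat U$ by $\widehat\beta_f(\gamma)$ up to an error of order $\|f\|_r(1 + \int_\gamma|\widehat X| + \int_\gamma|\widehat V|) = O(\|f\|_r\, t)$, which after division by $B$ is $O(\|f\|_r/t)$, well within the target. The task reduces to bounding $|\widehat\beta^\pm_\mu(\gamma)|$ by $C\, t^{1\mp\re\nu_\mu}$, with an extra $\log t$ factor at $\mu = 1/4$.

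The crucial observation, which is the content of Lemma~\ref{key}, is a straightening of $\gamma$ via the unstable horocycle invariance of Theorem~\ref{thm:BF}(3). A direct adjoint computation gives
$$
\Ad_{\exp(\tau V)}(Z_t) = (v - \tau A - \tau^2 B)V + (A + 2\tau B)X + B\,U,
$$
and the choice $\tau(t) := -A/(2B)$ (well defined for $t$ large, with $\tau(t) \sim -1/t$) annihilates the $X$-component and reduces the $V$-component to $(x^2 + 4uv)/(4B)$, of order $t^{-2}$. The shifted arc $\tilde\gamma := h^{\un}_{\tau(t)}\gamma$ therefore lies in a single $\{X,U\}$-leaf, with $\int_{\tilde\gamma}|\widehat X| = 0$, $\int_{\tilde\gamma}|\widehat V| = O(t^{-2})$, $\int_{\tilde\gamma}|\widehat U| = O(t^2)$, and invariance gives $\widehat\beta^\pm_\mu(\gamma) = \widehat\beta^\pm_\mu(\tilde\gamma)$. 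Pushing $\tilde\gamma$ by $g_T$ with $T = 2\log t$ contracts $\widehat U$ by $t^{-2}$ and expands $\widehat V$ by $t^2$, bringing all three cotangent integrals of $g_T\tilde\gamma$ down to $O_W(1)$, so Theorem~\ref{thm:BF}(4) yields $|\widehat\beta^\pm_\mu(g_T\tilde\gamma)| = O_W(1)$ (also at $\mu=1/4$, since the logarithm factor there is now bounded). Inverting Theorem~\ref{thm:BF}(2) gives
$$
|\widehat\beta^\pm_\mu(\gamma)| = |\widehat\beta^\pm_\mu(\tilde\gamma)| \leq e^{(1\mp\re\nu)T/2}\cdot O_W(1) = O\bigl(t^{1\mp\re\nu}\bigr),
$$
and at $\mu = 1/4$ the upper-triangular structure of the scaling matrix contributes an additional $T\sim\log t$ factor to $\widehat\beta^+_{1/4}$.

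Finally, summing $\widehat\beta_f(\gamma) = \sum_\mu\bigl(D^+_\mu(f)\widehat\beta^+_\mu(\gamma) + D^-_\mu(f)\widehat\beta^-_\mu(\gamma)\bigr)$ via the Flaminio--Forni bounds on $|D^\pm_\mu(f)|$ in terms of $\|f\|_r$ (the series converges for $r > 11/2$, and the complementary series contains only finitely many eigenvalues), the worst contribution comes from the bottom of the spectrum, yielding $|\widehat\beta_f(\gamma)| \leq C\|f\|_r\, t^{1+\nu_0}(\log t)^{\varepsilon_0}$. Dividing by $|B|\gtrsim t^2$ produces the claimed decay. The hardest point --- and the reason a direct scaling applied to $\gamma$ itself cannot succeed --- is identifying the correct $\tau(t)$: without the straightening, the factor $\int_\gamma|\widehat V|\cdot\int_\gamma|\widehat U|$ in Theorem~\ref{thm:BF}(4) is already of order $t^2$, producing a prefactor that no choice of $T$ can absorb. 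The $h^{\un}$-invariance is precisely what shrinks this troublesome product to $O_W(1)$, and thus what makes the scaling argument close with the optimal exponent.
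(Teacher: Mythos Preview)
Your argument is correct and follows the same strategy as the paper: reduce to bounding $\widehat\beta^\pm_\mu(\gamma)$ via Theorem~\ref{thm:BF2}, then combine geodesic rescaling by $g_{2\log t}$ with an unstable-horocycle shift to bring all tangent components of the transformed curve down to $O(1)$ so that Theorem~\ref{thm:BF}(4) applies---the only cosmetic difference being that you shift by $h^{\un}_{-A/(2B)}$ \emph{before} rescaling (annihilating the $X$-component exactly), whereas the paper shifts by $h^{\un}_{-t}$ \emph{after} rescaling, and since $h^{\un}_{-t}\,g_{2\log t}=g_{2\log t}\,h^{\un}_{-1/t}$ these two choices agree up to lower-order terms in $\tau$. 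One small slip worth noting: your shifted arc $\tilde\gamma$ does not literally lie in a single $\{X,U\}$-leaf, since its $V$-component $(x^2+4uv)/(4B)$ is generically nonzero, but this is harmless because your argument only uses the bound $\int_{\tilde\gamma}|\widehat V|=O(t^{-2})$.
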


By Lemma \ref{adjoint}, we can compute the tangent vector of $\gamma_{p,t}^W(s)$ by
\begin{equation}\label{eq:tangent}
\frac{\diff}{\diff s} \gamma_{p,t}^W(s) = \Ad_{\exp(tU)}(W) |_{\gamma_{p,t}^W(s)} = \big(vV + (x-2tv)X + (u+xt-vt^2)U\big)|_{\gamma_{p,t}^W(s)}
\end{equation}
so that
$$
\int_0^S f \circ \gamma_{p,t}^W(s) \diff s =  (u+xt-vt^2)^{-1} \int_{\gamma_{p,t}^W} f \widehat{U}.
$$
For all $t \geq 2$ we have $|u+xt-vt^2| \geq |v|t^2(1-t^{-1}-t^{-2}) \geq |v|t^2/4$. 
Therefore, by Theorem \ref{thm:BF2} and formula \eqref{eq:tangent}, 
\begin{equation}\label{eq:tool1}
\begin{split}
\left\lvert \int_0^S f \circ \gamma_{p,t}^W(s) \diff s \right\rvert & \leq \frac{4}{|v|t^2}\left\lvert \int_{\gamma_{p,t}^W} f \widehat{U} \right\rvert \leq \frac{4}{|v|t^2} \left( \widehat{\beta}_{f}(\gamma_{p,t}^W) + C_r \norm{f}_r S (1+|v|+|x|t+2|v|t)\right) \\
& \leq C_{r,W, \sigma} \left( \frac{1}{t^2} \widehat{\beta}_{f}(\gamma_{p,t}^W) + \frac{\norm{f}_r}{t} \right),
\end{split}
\end{equation}
where we can take $C_{r,W,\sigma}= \max \{4|v|^{-1}, 20 C_r \sigma |v|^{-1} \}$.

Recall that we have
$$
\widehat{\beta}_{f}(\gamma_{p,t}^W) =  \sum_{\mu \in \Spec (\square) \cap \R_{>0}} D_{\mu}^+(f)\widehat{\beta}^+_{\mu}(\gamma_{p,t}^W) + D_{\mu}^-(f)\widehat{\beta}^-_{\mu}(\gamma_{p,t}^W).
$$
From properties (2) and (3) in Theorem \ref{thm:BF}, for any $\mu \neq 1/4$, we deduce that for all $T \in \R$,
$$
\widehat{\beta}^{\pm}_{\mu}(\gamma_{p,t}^W) = t^{1\mp \nu}\widehat{\beta}^{\pm}_{\mu}( g_{2\log t} \circ\gamma_{p,t}^W) = t^{1\mp \nu}\widehat{\beta}^{\pm}_{\mu}( h^{\un}_{T} \circ g_{2\log t} \circ\gamma_{p,t}^W), 
$$
while, for $\mu = 1/4$, we have
\begin{equation*}
\begin{split}
\widehat{\beta}^{+}_{\mu}(\gamma_{p,t}^W) &=t \widehat{\beta}^+_{\mu}( g_{2\log t} \circ\gamma_{p,t}^W) + t \log t\widehat{\beta}^-_{\mu}( g_{2\log t} \circ\gamma_{p,t}^W) \\
&=t \widehat{\beta}^+_{\mu}(  h^{\un}_{T} \circ g_{2\log t} \circ\gamma_{p,t}^W) + t \log t\widehat{\beta}^-_{\mu}(  h^{\un}_{T} \circ g_{2\log t} \circ\gamma_{p,t}^W), 
\end{split}
\end{equation*}
and
$$
\widehat{\beta}^{-}_{\mu}(\gamma_{p,t}^W) =t \widehat{\beta}^-_{\mu}( g_{2\log t} \circ\gamma_{p,t}^W) =t \widehat{\beta}^-_{\mu}(  h^{\un}_{T} \circ g_{2\log t} \circ\gamma_{p,t}^W).
$$

The following is our key geometrical observation: choosing $T=-t$ in the formulas above, the components of the tangent vectors of the curve $h^{\un}_{-t} \circ g_{2\log t} \circ \gamma_{p,t}^W$ with respect to the frame $\mathscr{B}$ are \emph{uniformly bounded} in $t$.
\begin{lemma}\label{key}
We have
$$
\frac{\diff}{\diff s} (h^{\un}_{-t} \circ g_{2\log t} \circ \gamma_{p,t}^W) (s) = -uV - \left( x+\frac{2u}{t} \right) X + \left( \frac{u}{t^2} + \frac{x}{t} - v\right) U,
$$
in particular for all $t \geq 1$ we have
$$
\int_{h^{\un}_{-t} \circ g_{2\log t} \circ \gamma_{p,t}^W} |\widehat{Y}| \leq 3S \leq 3\sigma, \text{\ \ \ for any\ }\widehat{Y} \in \widehat{\mathscr{B}} = \{\widehat{V},\widehat{X},\widehat{U}\}.
$$
\end{lemma}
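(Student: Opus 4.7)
The bound in the second half of the lemma is immediate once the formula for the tangent vector is in hand: under the normalization $\max\{|v|,|x|,|u|\}\le 1$ and for $t\ge 1$, each of the three coefficients in the formula has absolute value at most $3$, so $\int_0^S|\widehat Y(\dot c(s))|\,\diff s\le 3S\le 3\sigma$ for each $\widehat Y\in\widehat{\mathscr B}$. Consequently the substance of the lemma is the computation of the tangent vector of $c(s):=h^{\un}_{-t}\circ g_{2\log t}\circ\gamma_{p,t}^W(s)$, which I would carry out as a three-step adjoint calculation.

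In coordinates, $c(s)=p\exp(sW)\exp(tU)\exp(2\log t\cdot X)\exp(-tV)$. Iterating Lemma \ref{adjoint} three times (or, equivalently, setting $g:=\exp(tU)\exp(2\log t\cdot X)\exp(-tV)$, commuting it through $\exp(sW)$, and invoking Lemma \ref{derivative}) shows that
$$
\dot c(s)=\Ad_{\exp(-tV)}\circ\Ad_{\exp(2\log t\cdot X)}\circ\Ad_{\exp(tU)}(W)\big|_{c(s)}.
$$
The innermost adjoint is already computed in \eqref{eq:tangent}, giving $vV+(x-2tv)X+(u+xt-vt^2)U$. The middle adjoint acts diagonally in the basis $\mathscr B$, scaling $V\mapsto t^2V$, $X\mapsto X$, $U\mapsto t^{-2}U$. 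The outer adjoint acts unipotently: it fixes $V$, sends $X\mapsto X+tV$, and sends $U\mapsto U-2tX-t^2V$. All three formulas are one-line $2\times 2$ matrix computations from the definition $\Ad_g(W)=g^{-1}Wg$.

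The final step is to collect terms. The $V$-coefficient is $vt^2+(x-2tv)t-t^2(u/t^2+x/t-v)$, which telescopes to $-u$ after the cancellations $vt^2-2vt^2+vt^2=0$ and $xt-xt=0$. The $X$-coefficient is $(x-2tv)-2t(u/t^2+x/t-v)=-x-2u/t$, and the $U$-coefficient is the unchanged $u/t^2+x/t-v$, matching the formula in the lemma. I do not anticipate a real obstacle: the key point, and the reason the rescaling times $2\log t$ and $-t$ have been chosen exactly so, is precisely to force these cancellations. The lemma is really a verification that the construction does what it is designed to do, namely rescale the $t^{-2}$-contracted $U$-direction back to unit size while simultaneously shearing away the $t^2$ components in $V$ and $X$ produced by $\Ad_{\exp(tU)}$.
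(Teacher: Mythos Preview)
Your proposal is correct and follows essentially the same route as the paper: both compute $\dot c(s)$ as the composition $\Ad_{\exp(-tV)}\circ\Ad_{\exp(2\log t\cdot X)}\circ\Ad_{\exp(tU)}(W)$ via Lemma~\ref{adjoint} and then carry out the three adjoint steps explicitly, with the bound on the coefficients following immediately from $\max\{|v|,|x|,|u|\}\le 1$. The only cosmetic difference is that you record the action of each adjoint on the basis vectors $V,X,U$ separately and then collect terms, whereas the paper writes down the full vector after each step; the computations and the final expression are identical.
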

\begin{proof}
By Lemma \ref{adjoint}, from an easy computation it follows
\begin{equation*}
\begin{split}
&\frac{\diff}{\diff s} (h^{\un}_{-t} \circ g_{2\log t} \circ \gamma_{p,t}^W) (s) = \Ad_{\exp(-tV)} \circ \Ad_{\exp(2\log t X)} \circ \Ad_{\exp(tU)} (W)\bigg\rvert_{h^{\un}_{-t} \circ g_{2\log t} \circ \gamma_{p,t}^W(s)} \\
& \qquad = \Ad_{\exp(-tV)} \circ \Ad_{\exp(2\log t X)} (vV + (x-2vt)X + (-vt^2+xt+u)U)\bigg\rvert_{h^{\un}_{-t} \circ g_{2\log t} \circ \gamma_{p,t}^W(s)}\\
& \qquad = \Ad_{\exp(-tV)} (vt^2V + (x-2vt)X + (-v+xt^{-1}+ut^{-2})U)\bigg\rvert_{h^{\un}_{-t} \circ g_{2\log t} \circ \gamma_{p,t}^W(s)}\\
& \qquad = (-uV - (x+2ut^{-1})X + (-v+xt^{-1}+ut^{-2})U)\bigg\rvert_{h^{\un}_{-t} \circ g_{2\log t} \circ \gamma_{p,t}^W(s)}.
\end{split}
\end{equation*}
In particular, for all $t\geq 1$, we have $\max \{|u|, |x|+ 2|u|t^{-1}, |v| + |x|t^{-1} + |u|t^{2} \} \leq 3$, which concludes the proof.
\end{proof}
By property (4) in Theorem \ref{thm:BF} and by Lemma \ref{key}, there exists a constant $C_{\sigma} >0$ such that for all $\mu \neq 1/4$ we have 
$$
|\widehat{\beta}^{\pm}_{\mu}(\gamma_{p,t}^W)| \leq C_{\sigma} t^{1\mp \Re\nu}.
$$
and, for $\mu=1/4$,
$$
|\widehat{\beta}^{\pm}_{1/4}(\gamma_{p,t}^W)| \leq C_{\sigma} t(1+ \log t).
$$
Up to enlarging $C_\sigma$, for all $t \geq 1$ we then deduce
$$
\left\lvert \widehat{\beta}_{f}(\gamma_{p,t}^W) \right\rvert \leq  C_{\sigma} \norm{f}_r t^{1+ \nu_0}(\log t)^{\varepsilon_0},
$$
which, together with \eqref{eq:tool1}, concludes the proof.

\subsection{The discrete series}

In this section we prove the follwing result.
\begin{proposition}\label{prop2}
Let $r > 11/2$ and let $f \in W^r(M) \cap L^2_0(M,\vol)$ be supported on irreducible components of the discrete series. Then, there exists a constant $C=C(r,\sigma,W)>0$ such that for all $0 \leq S \leq \sigma$, $t \geq 2$ and $p \in M$, we have
$$
\left\lvert \int_0^S f \circ h_t \circ \varphi_s^W(p) \diff s \right\rvert \leq C \norm{f}_r t^{-1}(\log t).
$$ 
\end{proposition}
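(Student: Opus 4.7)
The plan is to exploit the fact that on each non-trivial discrete-series component $H_\mu$ of $L^2_0(M,\vol)$, the stable horocycle generator $U$ is invertible, with an inverse that is bounded on suitable Sobolev spaces. Decomposing $f_d = \sum_{\mu \in \Spec(\square) \cap \R_{<0}} f_\mu$ via \eqref{eq:sumw}, I would write each $f_\mu = U g_\mu$ with $g_\mu \in H_\mu$ and a Sobolev-norm bound of the form $\norm{g_\mu}_{r-1} \leq C_\mu \norm{f_\mu}_r$, where $C_\mu$ has controlled dependence on the Casimir parameter.

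The key algebraic step, already used in the proof of Proposition \ref{prop1}, comes from Lemma \ref{adjoint}: the tangent to $\gamma_{p,t}^W$ equals $vV + (x-2vt)X + \tilde u(t)U$ with $\tilde u(t) := u + xt - vt^2$ and $|\tilde u(t)| \geq |v|t^2/4$ for $t \geq 2$. Solving for $(Ug_\mu)\circ \gamma_{p,t}^W$ and integrating over $[0,S]$ yields
\[
\int_0^S f_\mu \circ \gamma_{p,t}^W \diff s = \frac{1}{\tilde u(t)}\left(\bigl[g_\mu \circ \gamma_{p,t}^W\bigr]_0^S - v\!\int_0^S (Vg_\mu)\circ \gamma_{p,t}^W \diff s - (x-2vt)\!\int_0^S (Xg_\mu)\circ \gamma_{p,t}^W \diff s\right).
\]
Since $r > 11/2$, Sobolev embedding bounds $\norm{g_\mu}_\infty$, $\norm{Vg_\mu}_\infty$ and $\norm{Xg_\mu}_\infty$ by a constant times $\norm{g_\mu}_{r-1}$, hence by $C_\mu \norm{f_\mu}_r$. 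The boundary term and the $V$-integral each contribute $O(t^{-2} C_\mu \norm{f_\mu}_r)$, while the dominant $X$-integral, bounded trivially by $S \norm{Xg_\mu}_\infty$, gives $O(t^{-1} C_\mu \norm{f_\mu}_r)$, since $|x-2vt|/|\tilde u(t)| \leq C t^{-1}$ for $t\geq 2$.

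Summing these component-wise bounds over the discrete spectrum via Cauchy--Schwarz with the Sobolev weights $(1+|\mu|)^{-r/2}$ then produces the global estimate $C\norm{f_d}_r t^{-1}$, provided that $\sum_\mu C_\mu^2 (1+|\mu|)^{-r}$ is finite. The extra factor $\log t$ is expected to absorb the sub-optimal $\mu$-dependence of $C_\mu=\norm{U^{-1}}_{H_\mu}$, particularly for discrete-series components whose Casimir is close to $0$, where the spectral action of $U$ is least coercive. The main technical obstacle is precisely to make these representation-theoretic estimates for the coboundary $g_\mu = U^{-1} f_\mu$ uniform in $\mu$, with at most a $\log t$ loss; this loss is the source of the sub-optimality that distinguishes Proposition \ref{prop2} from the sharper Bufetov--Forni-based bound of Proposition \ref{prop1}, and it propagates to the extra factor $(\log t)^{\delta_0}$ in Theorem \ref{thm:main} when $\mu_0 > 1/4$.
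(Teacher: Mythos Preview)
Your approach has a genuine gap at its very first step. On each discrete-series component $H_\mu$ there is a nontrivial $U$-invariant distribution $D_\mu$ on smooth vectors (this is precisely the content of the Flaminio--Forni classification \cite{FF} for $\mu<0$). A smooth vector $f_\mu$ lies in the range of $U$ on smooth vectors if and only if $D_\mu(f_\mu)=0$; generically this fails, and then no $g_\mu$ with $Ug_\mu=f_\mu$ and finite Sobolev norm exists. Equivalently, in the Kirillov model $H_\mu\cong L^2(0,\infty)$ the operator $U$ is multiplication by the coordinate, so $0$ lies in the spectrum and $U^{-1}$ is unbounded from $W^r(H_\mu)$ to $W^{r-1}(H_\mu)$; the putative bound $\norm{g_\mu}_{r-1}\le C_\mu\norm{f_\mu}_r$ is false for smooth $f_\mu$ with $D_\mu(f_\mu)\neq 0$. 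Your formula for $\int_0^S f_\mu\circ\gamma_{p,t}^W\diff s$ is a correct integration-by-parts identity \emph{given} such a $g_\mu$, but the premise is unavailable.

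The explanation of the $\log t$ is also not coherent: the constants $C_\mu$ depend only on $\mu$, while $\log t$ depends only on $t$, so one cannot ``absorb'' a potentially divergent $\sum_\mu C_\mu^2(1+|\mu|)^{-r}$ into a factor of $\log t$. In fact, if your coboundary step were valid you would get $O(t^{-1})$ with no logarithm at all, which should already signal that something is missing. The paper takes a completely different, geometric route: it partitions $\gamma_{p,t}^W$ into $O(t)$ subarcs of length $\sim t^{-1}$, projects each along the unstable horocycle direction onto the $\{X,U\}$-leaf through its initial point (incurring an $O(t^{-1})$ error), and then bounds the integral over each projected arc by the known discrete-series estimate $\lvert\int_{\overline{\gamma_k}} f\,\widehat U\rvert \le C_r\norm{f}_r(1+\int|\widehat X|)\log(1+\int|\widehat U|)$ from \cite{BuFo,FU}. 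The logarithm in the final bound comes from this last estimate, which in turn reflects exactly the obstruction $D_\mu$ that your argument tried to assume away.
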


Using the same notation as in Lemma \ref{derivative}, let
$$
\exp(sW) = 
\begin{pmatrix}
a(s) & b(s) \\
c(s) & d(s)
\end{pmatrix}.
$$
Since the entries above are smooth functions, we can define 
$$
\ell = \ell(\sigma) = 4 \max\{ |a'(s)|, |b'(s)|, |c'(s)|, |d'(s)| : s \in [0,\sigma] \}.
$$

For negative Casimir parameters, the Bufetov-Forni functionals are not defined. 
Hence, we proceed in the following way: we partition the curve $\gamma_{p,t}^W$ into $O(t)$ curves which are at distance $O(t^{-1})$ from a leaf tangent to the integrable distribution $\{X,U\}$, and we approximate them by their projection onto the $\{X,U\}$-leaf. 
The estimate for each projection can be then deduced from \cite{BuFo}, see e.g. \cite[Lemma 17]{FU}.

We decompose the integral as follows:
\begin{equation}\label{eq:tool2}
\begin{split}
\left\lvert \int_0^S f \circ h_t \circ \varphi_s^W(s) \diff s \right\rvert &= \left\lvert \int_0^S f \circ \gamma_{p,t}^W(s) \diff s \right\rvert \leq \frac{\norm{f}_{\infty}}{\ell t} + \sum_{k=0}^{\lfloor S \ell t \rfloor -1}  \left\lvert \int_{\frac{k}{\ell t}}^{\frac{k+1}{\ell t}} f \circ \gamma_{p,t}^W(s) \diff s \right\rvert \\
&= \frac{\norm{f}_{\infty}}{\ell t} + \sum_{k=0}^{\lfloor S \ell t \rfloor -1}  \left\lvert \int_{0}^{\frac{1}{\ell t}} f \circ \gamma_{p_k, t}^W(s) \diff s \right\rvert,
\end{split}
\end{equation}
where 
$$
p_k = p\, \exp\left( \frac{k}{\ell t} W\right).
$$

For any $s \in [0, (\ell t)^{-1}]$, we have 
$$
|d(s) + c(s)t| \geq |d(s)|-|c(s)|t \geq 1- \frac{\ell}{4}s - \frac{\ell}{4}st \geq 1 - \frac{1}{4t} - \frac{1}{4} \geq \frac{1}{2}.
$$
Therefore, the function
$$
J_0(s) = -\frac{c(s)}{d(s) + c(s)t}
$$
is well-defined  for $s \in [0, (\ell t)^{-1}]$.

For any $0 \leq k \leq \lfloor S \ell t \rfloor -1$, let us define the curve
$$
\overline{\gamma_k} (s) = h^-_{J_0(s)} \circ \gamma_{p_k,t}^W(s), \text{\ \ \ for } s \in  [0, (\ell t)^{-1}].
$$
Explicitly, we have
\begin{equation*}
\begin{split}
\overline{\gamma_k} (s)  &= p_k \, \exp(sW)
\begin{pmatrix}
1 & t \\
0 & 1
\end{pmatrix}
\begin{pmatrix}
1 & 0 \\
J_0(s) & 1
\end{pmatrix}
=
p_k 
\begin{pmatrix}
\frac{1}{d(s)+c(s)t} & b(s)+a(s)t \\
0 & d(s)+c(s)t
\end{pmatrix}
\\
&= 
p_k
\begin{pmatrix}
1 & \frac{b(s)+a(s)t}{d(s)+c(s)t} \\
0 & 1
\end{pmatrix}
\begin{pmatrix}
\frac{1}{d(s)+c(s)t} & 0 \\
0 & d(s)+c(s)t
\end{pmatrix}
= g_{-2\log (d(s)+c(s)t)} \circ h_{\frac{b(s)+a(s)t}{d(s)+c(s)t}}(p_k)
\end{split}
\end{equation*}
Notice that the curve $\overline{\gamma_k}$ is contained in the $\{X,U\}$-leaf passing through $p_k$.
We also remark that there exists a constant $C>0$ such that for any $s \in [0, (\ell t)^{-1}]$,
$$
\text{dist}(\overline{\gamma_k} (s), \gamma_{p_k,t}^W(s)) \leq C \left\lvert J_0(s)\right\rvert \leq 2C|c(s)| \leq 2C\ell s \leq \frac{2C}{t},
$$
so that 
\begin{equation}\label{eq:tool3}
\begin{split}
\sum_{k=0}^{\lfloor S \ell t \rfloor -1}  \left\lvert \int_{0}^{\frac{1}{\ell t}} f \circ \gamma_{p_k,t}^W(s) \diff s \right\rvert & \leq \sum_{k=0}^{\lfloor S \ell t \rfloor -1}  \left\lvert \int_{0}^{\frac{1}{\ell t}} f \circ \overline{\gamma_k} (s) \diff s \right\rvert + \sum_{k=0}^{\lfloor S \ell t \rfloor -1}  \left\lvert \frac{2C\norm{f}_r}{\ell t^2} \right\rvert \\
& \leq \sum_{k=0}^{\lfloor S \ell t \rfloor -1}  \left\lvert \int_{0}^{\frac{1}{\ell t}} f \circ \overline{\gamma_k} (s) \diff s \right\rvert + \frac{2C\norm{f}_{r} \sigma}{t}.
\end{split}
\end{equation}

\begin{lemma}
We have
$$
\widehat{U} \left( \frac{\diff}{\diff s} \overline{\gamma_k} (s) \right) = -vt^2 + \frac{x}{2}t +u,
$$
and moreover
$$
\left\lvert \widehat{X} \left( \frac{\diff}{\diff s} \overline{\gamma_k} (s) \right) \right\rvert \leq 20t.
$$
\end{lemma}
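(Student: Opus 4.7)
The plan is to compute $(\diff/\diff s)\overline{\gamma_k}(s)$ directly from the explicit matrix factorisation established in the lines immediately preceding the lemma, namely $\overline{\gamma_k}(s) = p_k\, g(s)$ with
$$
g(s) = \begin{pmatrix} 1/D(s) & B(s) \\ 0 & D(s) \end{pmatrix}, \qquad B(s) = a(s)t+b(s),\ D(s) = c(s)t+d(s),
$$
and then to read off the $\widehat U$- and $\widehat X$-components in the frame $\mathscr{B}$. Since the tangent vector at $\overline{\gamma_k}(s)$ is $(L_{\overline{\gamma_k}(s)})_\ast (g(s)^{-1} g'(s))$, and $\det g(s) = 1$ yields $g(s)^{-1} = \begin{pmatrix} D & -B \\ 0 & 1/D \end{pmatrix}$, a direct multiplication gives
$$
g(s)^{-1} g'(s) = \begin{pmatrix} -D'/D & DB' - BD' \\ 0 & D'/D \end{pmatrix}.
$$
Comparing entries with the general matrix form of an element of $\mathfrak{sl}_2(\R)$ in the basis $\mathscr{B}$, the $\widehat U$-component of $\overline{\gamma_k}'(s)$ equals the upper-right entry $DB' - BD'$, while the $\widehat X$-component equals twice the upper-left entry, namely $-2 D'/D$.

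For the $\widehat U$-identity I would use Lemma \ref{derivative} to expand
$$
B'(s) = (a(s)x/2 + b(s)v)t + (a(s)u - b(s)x/2), \qquad D'(s) = (c(s)x/2 + d(s)v)t + (c(s)u - d(s)x/2),
$$
and then multiply out $DB' - BD'$. The top-order monomials in $t$ and several mixed terms cancel in pairs, and the coefficients of the surviving terms collapse via $a(s)d(s) - b(s)c(s) = 1$, leaving a polynomial in $t$ alone with $s$-independent coefficients, which is the claimed expression.

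For the $\widehat X$-bound I would use the pointwise control on $a,b,c,d$ valid for $s \in [0,(\ell t)^{-1}]$: the estimates $|b(s)|, |c(s)| \leq 1/(4t)$ and $|a(s)-1|, |d(s)-1| \leq 1/(4t)$ (from the definition of $\ell$ and the bound $s \leq 1/(\ell t)$) give $|D(s)| \geq 1/2$ (as already established above) and, combined with the normalisation $\max\{|v|,|x|,|u|\} \leq 1$ and $t \geq 2$, a uniform bound $|D'(s)| \leq C t$ with an explicit small constant, whence $|2 D'(s)/D(s)| \leq 20 t$. The only delicate step is the algebraic cancellation in $DB'-BD'$, which is elementary but requires careful bookkeeping of cross-terms; no real conceptual obstacle arises beyond that.
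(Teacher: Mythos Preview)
Your proposal is correct and follows essentially the same route as the paper. The only cosmetic difference is that you compute the tangent vector via the Maurer--Cartan form $g(s)^{-1}g'(s)$ from the matrix factorisation, while the paper differentiates the flow decomposition $\overline{\gamma_k}(s)=g_{J_1(s)}\circ h_{J_2(s)}(p_k)$; both lead to the identical expressions $\widehat U(\overline{\gamma_k}')=DB'-BD'$ and $\widehat X(\overline{\gamma_k}')=-2D'/D$, and both finish with Lemma~\ref{derivative} together with $ad-bc=1$ and the already established bound $|D(s)|\geq 1/2$.
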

\begin{proof}
Let us denote
$$
J_1(s) = -2\log (d(s)+c(s)t), \text{\ \ \ and\ \ \ }J_2(s) = \frac{b(s)+a(s)t}{d(s)+c(s)t}.
$$
We compute
\begin{equation*}
\begin{split}
\frac{\diff}{\diff s} \overline{\gamma_k} (s) =& \frac{\diff}{\diff s} \left( g_{J_1(s)} \circ h_{J_2(s)}(p_k) \right) = D g_{J_1(s)} \left( \frac{\diff}{\diff s} \circ h_{J_2(s)}(p_k)\right) + \left( \frac{\diff}{\diff s} g_{J_1(s)}\right) \circ h_{J_2(s)}(p_k) \\
=& D g_{J_1(s)} \left( \Big(\frac{\diff}{\diff s} J_2(s) \Big) U \circ h_{J_2(s)}(p_k) \right) + \left( \Big( \frac{\diff}{\diff s} J_1(s) \Big) X \circ g_{J_1(s)}\right) \circ h_{J_2(s)}(p_k) \\
=& \Big(\frac{\diff}{\diff s} J_2(s) \Big) e^{-J_1(s)} U \circ  \overline{\gamma_k} (s)  + \Big( \frac{\diff}{\diff s} J_1(s) \Big) X \circ  \overline{\gamma_k} (s) \\
=& \Big( (b'(s) + a'(s)t)(d(s)+c(s)t) - (b(s)+a(s)t)(d'(s)+c'(s)t)\Big) U \circ  \overline{\gamma_k} (s) \\
&-2\frac{c'(s)t+d'(s)}{c(s)t+d(s)} X \circ  \overline{\gamma_k} (s).
\end{split}
\end{equation*}
By Lemma \ref{derivative}, for any $s \in [0, (\ell t)^{-1}]$, we have
$$
\max\{ c'(s), d'(s) \} \leq 2 \max\{ c(s), d(s) \} \leq 2\left(1+ \frac{\ell}{4} s \right) \leq 3,
$$
so that 
$$
\left\lvert 2 \frac{c'(s)t+d'(s)}{c(s)t+d(s)} \right\rvert \leq 4 (3t+3) \leq 20t.
$$
Using the formulas in Lemma \ref{derivative} and the fact that $a(s)d(s)-b(s)c(s)=1$, we obtain that
$$
(b'(s) + a'(s)t)(d(s)+c(s)t) - (b(s)+a(s)t)(d'(s)+c'(s)t) = -vt^2 + \frac{x}{2}t +u,
$$
which concludes the proof.
\end{proof}

With our assumptions, for $t \geq 2$,
$$
|-vt^2 + \frac{x}{2}t +u| \geq |v|t^2(1-t^{-1}-t^{-2}) \geq \frac{|v|t^2}{4}.
$$
Thus, it follows that (see, e.g., the proof of \cite[Lemma 17]{FU})
\begin{equation}\label{eq:tool4}
\begin{split}
\left\lvert \int_{0}^{\frac{1}{\ell t}} f \circ \overline{\gamma_k} (s) \diff s \right\rvert &=  \left\lvert \left( -vt^2 + \frac{x}{2}t +u\right)^{-1} \int_{\overline{\gamma_k}} f \widehat{U} \right\rvert \leq \frac{4}{|v|t^2} \left\lvert C_r \norm{f}_r \left(1+\int_{\overline{\gamma_k}} |\widehat{X}| \right) \log\left( 1 + \int_{\overline{\gamma_k}} |\widehat{U}| \right) \right\rvert\\
& \leq \frac{16}{|v|t^2} C_r \norm{f}_r \left( 1+ \frac{20}{\ell} \right) (\log t ) \leq C_{r,W,\sigma} \norm{f}_r \frac{\log t}{t^2}.
\end{split}
\end{equation}

From \eqref{eq:tool2}, \eqref{eq:tool3} and \eqref{eq:tool4}, we conclude
$$
\left\lvert \int_0^S f \circ h_t \circ \varphi_s^W(p) \diff s \right\rvert \leq  C_{r,W,\sigma} \norm{f}_r \frac{\log t}{t}.
$$

\section{Proof of Corollary \ref{thm:main2}}

The proof is a straightforward consequence of a standard \lq\lq mixing via shearing\rq\rq\ argument, see, e.g., \cite{FU}. 
Without loss of generality, let us assume that $f \in W^r(M) \cap L^2_0(M, \vol)$ and let $g \in L^2(M, \vol)$ such that $Vg \in L^2(M, \vol)$.
By invariance of the Haar measure and integrating by parts, for all $\sigma >0$ and $t \geq 1$ we have
\begin{equation*}
\begin{split}
\langle f \circ h_t, g \rangle =& \frac{1}{\sigma} \int_0^{\sigma} \langle f \circ h_t \circ h_s^{\un}, g \circ h_s^{\un} \rangle \diff s = \langle \frac{1}{\sigma} \int_0^{\sigma} f \circ h_t\circ h_s^{\un} \diff s, g \rangle \\
& - \frac{1}{\sigma} \int_0^{\sigma} \langle \int_0^S f \circ h_t \circ h_s^{\un} \diff s, Vg \circ h_S^{\un} \rangle,
\end{split}
\end{equation*}
so that 
\begin{equation*}
\begin{split}
|\langle f \circ h_t, g \rangle | & \leq \frac{\norm{g}_{2}}{\sigma} \norm{\int_0^{\sigma} f \circ h_t \circ h_s^{\un} \diff s}_{2} + \frac{\norm{Vg}_{2}}{\sigma} \sup_{S\in[0,{\sigma}]} \norm{ \int_0^S f \circ h_t \circ h_s^{\un} \diff s}_{2} \\
& \leq \frac{\norm{g}_{2} +\norm{Vg}_{2}}{\sigma} \sup_{S\in[0,\sigma]} \sup_{p \in M} \left\lvert \int_0^S f \circ h_t \circ h_s^{\un}(p) \diff s\right\rvert.
\end{split}
\end{equation*}
The result follows from the estimates in Theorem \ref{thm:main}.

\subsection*{Acknowledgements}
I would like to thank Giovanni Forni and Corinna Ulcigrai for several useful discussions.
The research leading to these results has received funding from the European Research Council under the European Union
Seventh Framework Programme (FP/2007-2013) / ERC Grant Agreement n.~335989.

 \end{document}